\definecolor{webgreen}{rgb}{0,.5,0}
\definecolor{webbrown}{rgb}{.6,0,0}
\begin{document}

\theoremstyle{plain}
\newtheorem{theorem}{Theorem}
\newtheorem{proposition}{Proposition}
\newtheorem{corollary}[theorem]{Corollary}
\newtheorem{lemma}{Lemma}
\newtheorem*{example}{Examples}
\newtheorem{remark}{Remark}

\begin{center}
\vskip 1cm{\LARGE\bf 
Some notes on a Fibonacci--Lucas identity \\
}
\vskip 1.1cm
{\large
Kunle Adegoke \\
Department of Physics and Engineering Physics \\ Obafemi Awolowo University\\Ile-Ife, Nigeria \\
\href{mailto:adegoke00@gmail.com}{\tt adegoke00@gmail.com}

\vskip 0.25 in

Robert Frontczak\\
Independent Researcher \\
Reutlingen,   Germany\\
\href{mailto:robert.frontczak@web.de}{\tt robert.frontczak@web.de}

\vskip 0.25 in

Taras Goy  \\
Faculty of Mathematics and Computer Science\\
Vasyl Stefanyk Precarpathian National University\\
Ivano-Frankivsk, Ukraine\\
\href{mailto:taras.goy@pnu.edu.ua}{\tt taras.goy@pnu.edu.ua}}
\end{center}

\vskip .15 in

\noindent 2010 {\it Mathematics Subject Classification}: Primary 11B39; Secondary 11B37.

\vskip .1 in

\noindent \emph{Keywords:} Fibonacci (Lucas) number, generalized Lucas polyno\-mial, Chebyshev polynomial, summation identity.

\medskip

\begin{abstract}
In 2016, Edgar and, independently of him, Bhatnagar sta\-ted a nice polynomial identity that connects Fibonacci and Lucas numbers. 
Shortly after their publications, this identity has been generalized in two different ways: 
Dafnis, Phillipou and Livieris provided a generalization to Fibonacci sequences of order $k$ and Abd-Elhameed and Zeyada extended Edgar--Bhatnagar identity to generalized Fibonacci and Lucas sequences. In this paper, we present more polynomial identities for generalized Lucas sequences. We discuss interesting aspects and special cases which have not been stated before but deserve recognition. Finally, we prove the polynomial analogues of these identities for Chebyshev polynomials. 
 
\end{abstract}

\section{Motivation}

As usual, the Fibonacci numbers and Lucas numbers will be denoted by $F_n$ and $L_n$, respectively. They are defined, for $n\in\mathbb Z$, through the recurrence relations 
\begin{equation*}
x_n = x_{n-1}+x_{n-2},\qquad n\ge 2,
\end{equation*}
with initial values $F_0=0$, $F_1=1$ and 
$L_0=2$, $L_1=1$. For negative subscripts we have
\begin{equation*} F_{-n}=(-1)^{n-1}F_n,\qquad L_{-n}=(-1)^n L_n.
\end{equation*}
They possess the explicit formulas (Binet forms)
\begin{equation*}
F_n = \frac{\alpha^n - \beta^n }{\alpha - \beta },\qquad L_n = \alpha^n + \beta^n,\quad n\in\mathbb Z.
\end{equation*}
Standard references on these sequences are the textbooks by Koshy \cite{Koshy} and Vajda \cite{Vajda} 
in which a huge amount of additional information is presented. 

Our motivation for these notes is the following prominent identity due to Edgar \cite{Edgar} and  Bhatnagar \cite{Bhatnagar} 
from 2016 where they generalized two Fibonacci--Lucas identities from the American Mathematical Monthly:
\begin{equation}\label{id_edgar}
\sum_{k=0}^n x^k (L_k + (x-2)F_{k+1}) = x^{n+1} F_{n+1}.
\end{equation}
This identity holds for all $x\in\mathbb{C}$ and the special cases $x=2$ and $x=3$, respectively, have been proved by
Benjamin and Quinn \cite{BenQuinn} and Marques \cite{Marques}. A polynomial variant was given by Sury \cite{Sury}. Dafnis, Philippou and Livieris have generalized this identity to Fibonacci and Lucas numbers of order $k$ and gave two different proofs 
for their generalization \cite{Dafnis1,Philippou}. In addition, using Euler's telescoping lemma Bhatnagar derived in \cite{Bhatnagar} an alternating version of \eqref{id_edgar} as follows: 
\begin{equation}\label{id_dpl}
\sum_{k=0}^n (-1)^k x^{n-k} \big(L_{k+1} + (x-2)F_{k}\big) = (-1)^{n} F_{n+1}.
\end{equation} 

On the other hand, Abd-Elhameed and Zeyada \cite{ameed_zeyada18}, without a reference to Edgar or Bhatnagar, derived two polynomial identities, one of which generalizes the polynomial identity of Sury \cite{Sury}. Finally, in a recent paper, Chung, Yao and Zhou \cite{Chung} provided extensions of Sury's relation and the alternating
Sury's relation involving Fibonacci $k$-step and Lucas $k$-step polynomials.

In this paper, we proceed in the same direction and present more polynomial identities for generalized Lucas sequences. We discuss interesting aspects and special cases which have not been stated before. Finally, we prove the polynomial analogues of these identities for Chebyshev polynomials.

\section{Additional polynomial generalizations}

In this section, we give generalizations of \eqref{id_edgar} and \eqref{id_dpl}
to the general Lucas sequences and prove two another identities of the same nature. 

For non-zero complex numbers $y$ and $z$, the Lucas sequences of the first kind, $\left(u_n(y,z)\right)_{n\geq0}$, 
and of the second kind, $\left(v_n(y,z)\right)_{n\geq0}$, are defined (see e.g., Ribenboim \cite[Chapter 1]{ribenboim})
through the recurrence relations 
\begin{equation*}
u_n(y,z) = yu_{n-1}(y,z) - zu_{n-2}(y,z), \quad n\ge 2, \quad u_0(y,z)=0,\, u_1(y,z)=1,
\end{equation*}
and
\begin{equation*}
v_n(y,z) = yv_{n-1}(y,z) - zv_{n-2}(y,z), \quad n\ge 2, \quad v_0(y,z)=2,\, v_1(y,z)=y,
\end{equation*}
with
\begin{equation*}
u_{-n}(y,z) = -u_n(y,z)z^{-n},\qquad  v_{-n}(y,z) = v_n(y,z)z^{-n}.
\end{equation*}

Denote by $\tau$ and $\sigma$ the zeros of the characteristic polynomial $x^2-yx+z$ for the Lucas sequences. Then
\begin{equation*}
\tau = \tau(y,z) = \frac{y+\sqrt{y^2-4z}}2,\qquad \sigma = \sigma(y,z) = \frac{y-\sqrt{y^2-4z}}2\,,
\end{equation*}
with $\tau + \sigma = y$, $\tau - \sigma = \sqrt{y^2-4z}$ and $\tau\sigma=z$\,.

The difference equations are solved by the Binet formulas
\begin{equation*}
u_n = \frac{\tau^n - \sigma^n}{\tau-\sigma},\qquad v_n = \tau^n+\sigma^n\,.
\end{equation*}

We need the following two lemmas, of which the first one contains basic telescoping summation identities.
\begin{lemma}
	If $(f_k)_{k\in\mathbb Z}$ is a real sequence and $j$, $n\in \mathbb{Z}$, then
	\begin{align}\label{tele_id1}
	\sum_{k = j}^n \left( {f_{k + 1} - f_k } \right) &= f_{n + 1} - f_j,\\
	\label{tele_id2}
	\sum_{k = j}^n (- 1)^{k} \left( {f_{k + 1} + f_k } \right)& = (- 1)^{n} f_{n + 1} + (- 1)^j f_j.
	\end{align}
\end{lemma}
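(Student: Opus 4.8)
The plan is to prove both identities by direct telescoping; neither requires any structural input beyond reindexing, so induction is optional. For \eqref{tele_id1} I would write out the partial sum
\[
\sum_{k=c}^n (f_{k+1}-f_k) = (f_{c+1}-f_c)+(f_{c+2}-f_{c+1})+\cdots+(f_{n+1}-f_n)
\]
and observe that each $f_j$ with $c+1\le j\le n$ occurs exactly once with a $+$ sign and once with a $-$ sign, so the sum collapses to $f_{n+1}-f_c$. To make the cancellation fully rigorous for every pair of integers $c\le n$, I would instead run a one-line induction on $n$: the case $n=c$ is just $f_{c+1}-f_c$, and the inductive step adds the single term $f_{n+2}-f_{n+1}$ to $f_{n+1}-f_c$, giving $f_{n+2}-f_c$. (If one adopts the convention that $\sum_{k=c}^{c-1}$ is the empty sum, the identity also holds there, since $f_c-f_c=0$.)

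For \eqref{tele_id2} the idea is to reduce it to \eqref{tele_id1} by absorbing the alternating sign into the sequence. Setting $g_k := (-1)^{k-1} f_k$, one checks directly that
\[
g_{k+1}-g_k = (-1)^k f_{k+1} - (-1)^{k-1} f_k = (-1)^k\bigl(f_{k+1}+f_k\bigr),
\]
so the left-hand side of \eqref{tele_id2} equals $\sum_{k=c}^n (g_{k+1}-g_k)$. Applying \eqref{tele_id1} to the sequence $(g_k)$ yields $g_{n+1}-g_c = (-1)^n f_{n+1} - (-1)^{c-1} f_c = (-1)^n f_{n+1} + (-1)^c f_c$, which is exactly the claimed right-hand side.

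There is no real obstacle here: the lemma is elementary and the only thing to watch is the bookkeeping of signs in the substitution $g_k=(-1)^{k-1}f_k$ and, if one wants full generality, the convention attached to $\sum_{k=c}^n$ when $n<c$. I would state the lemma for $c\le n$ to avoid that ambiguity, since that is the only range in which it will be invoked later in the paper.
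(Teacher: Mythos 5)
Your proof is correct; the paper in fact states this lemma without proof, treating it as a standard telescoping fact, and your argument (telescoping/induction for \eqref{tele_id1}, then the substitution $g_k=(-1)^{k-1}f_k$ to reduce \eqref{tele_id2} to \eqref{tele_id1}) is exactly the routine verification being taken for granted. The sign bookkeeping checks out, so nothing further is needed.
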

\begin{lemma}
	For all $x$, $y$, $r\in \mathbb{C}$, $r\ne 0$ and $k\in\mathbb Z$, we have
	\begin{align}\label{eq.aklr76c}
	&\qquad \frac{x^k}{r^k} \big( {rv_k (y,z) + (xy - 2r)u_{k + 1} (y,z)} \big) = y\left( {\frac{{x^{k + 1} }}{{r^k }}u_{k + 1} (y,z) - \frac{{x^k }}{{r^{k - 1} }}u_k (y,z)} \right)\!,\\[4pt]
	\label{eq.iq7k21o}
	&\frac{{x^k }}{{r^k }} \big( {(y^2 - 4z)ru_k (y,z) + (xy - 2r)v_{k + 1} (y,z)} \big)  =  y\left( {\frac{{x^{k + 1} }}{{r^k }}v_{k + 1} (y,z) - \frac{{x^k }}{{r^{k - 1} }}v_k (y,z)} \right)\!.
	\end{align}
\end{lemma}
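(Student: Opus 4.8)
The plan is to strip the powers of $x$ and $r$ off each identity and reduce it to a standard companion relation between the sequences $u_n(y,z)$ and $v_n(y,z)$, which is then immediate from the Binet formulas.

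First I would clear denominators. Since $x\neq 0$ and $r\neq 0$, multiplying \eqref{eq.aklr76c} through by $r^{k}/x^{k}$ shows it is equivalent to
\[
r\,v_k(y,z) + (xy - 2r)\,u_{k+1}(y,z) = y\bigl(x\,u_{k+1}(y,z) - r\,u_k(y,z)\bigr).
\]
On the right-hand side the term $xy\,u_{k+1}$ cancels the identical term on the left, and dividing what remains by $r$ leaves the $x$- and $r$-free identity $v_k = 2u_{k+1} - y u_k$. The same reduction applied to \eqref{eq.iq7k21o} leaves $(y^2-4z)\,u_k = 2v_{k+1} - y v_k$. Hence it suffices to prove these two identities for every $k\in\mathbb Z$.

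For the first, rewriting the recurrence as $u_{k+1} - y u_k = -z\,u_{k-1}$ (valid for all integer $k$, as one sees at once from the Binet form of $u_n$) gives $2u_{k+1} - y u_k = u_{k+1} - z u_{k-1}$, and the Binet formulas together with $\tau\sigma = z$ yield $\tau^{k+1}-\sigma^{k+1} - \tau\sigma(\tau^{k-1}-\sigma^{k-1}) = (\tau-\sigma)(\tau^{k}+\sigma^{k})$, so dividing by $\tau-\sigma$ gives $u_{k+1} - z u_{k-1} = v_k$. The second identity is handled in exactly the same way: $2v_{k+1} - y v_k = v_{k+1} - z v_{k-1}$, and the analogous Binet computation gives $v_{k+1} - z v_{k-1} = (\tau-\sigma)(\tau^{k}-\sigma^{k}) = (\tau-\sigma)^2 u_k = (y^2-4z)\,u_k$, using $(\tau-\sigma)^2 = y^2-4z$. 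Both $v_k = u_{k+1} - z u_{k-1}$ and $(y^2-4z)u_k = v_{k+1} - z v_{k-1}$ are the generalized-Lucas analogues of the classical identities $L_n = F_{n+1}+F_{n-1}$ and $5F_n = L_{n+1}+L_{n-1}$.

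Alternatively one can bypass the companion relations entirely and substitute the Binet formulas into the two reduced identities directly; after using $\tau+\sigma = y$ and $\tau\sigma = z$ both sides collapse to the same linear combination of $\tau^{k}$ and $\sigma^{k}$. Either route is a brief computation, so I do not anticipate a genuine obstacle; the only point deserving a moment's care is to phrase the manipulations so that they remain valid for negative indices $k$ as well, which is automatic once everything is expressed through the Binet forms, since those hold for all $k\in\mathbb Z$.
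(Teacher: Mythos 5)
Your proof is correct and follows essentially the same route as the paper: after pulling out the factor $x^k/r^k$, both arguments come down to the companion relations $v_k=u_{k+1}-zu_{k-1}$ and $(y^2-4z)u_k=v_{k+1}-zv_{k-1}$ combined with the recurrences \eqref{eq.s4dauqs} and \eqref{eq.m94zp3w} (equivalently, $v_k=2u_{k+1}-yu_k$ and $(y^2-4z)u_k=2v_{k+1}-yv_k$). The only difference is that you verify these companion identities directly from the Binet forms, whereas the paper cites them (Ribenboim, Identity (2.7)); also note that the implication you actually need, from the cleared identity back to \eqref{eq.aklr76c}, is just multiplication by $x^k/r^k$ and so does not require $x\neq 0$.
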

\begin{proof} On account of the recurrence relation
	\begin{equation}\label{eq.s4dauqs}
	u_{k+1}(y,z) + zu_{k-1}(y,z)=yu_k(y,z)
	\end{equation}
	and the identity (\cite[Identity (2.7)]{ribenboim})
	\begin{equation}\label{eq.hain6x5}
	v_k(y,z) = u_{k + 1}(y,z) - zu_{k - 1}(y,z),
	\end{equation}
	we have
	\[
	rv_k (y,z) + (xy - 2r)u_{k + 1} (y,z) = y\big( {xu_{k + 1} (y,z) - ru_k (y,z)} \big)
	\]
	and hence \eqref{eq.aklr76c}. The proof of \eqref{eq.iq7k21o} is similar. We use
	\begin{equation}\label{eq.m94zp3w}
	v_{k+1}(y,z) + zv_{k-1}(y,z)=yv_k(y,z)
	\end{equation}
	and the identity
	\begin{equation}\label{eq.rmgib02}
	(y^2 - 4z)u_k(y,z) = v_{k + 1}(y,z) - zv_{k - 1}(y,z).
	\end{equation}
\end{proof}
\begin{theorem}\label{thm_Luc}
	For all $x$, $y$, $r\in\mathbb{C}$ and $j$, $n\in\mathbb Z$,
	\begin{equation}\label{eq.teo1nc4}
	\sum_{k=j}^n r^{n - k} x^k \big( {rv_k (y,z) + (xy - 2r)u_{k + 1} (y,z)} \big) 
	= x^{n + 1} yu_{n + 1} (y,z) - r^{n - j + 1} x^j y u_j (y,z),
	\end{equation}
	\begin{equation}\label{eq.y7m6w1x}
	\begin{split}
	\sum_{k = j}^n r^{n - k} x^k  & \big( {(y^2 - 4z)ru_k (y,z) + (xy - 2r)v_{k + 1} (y,z)} \big)\\ 
	&\,\,\,\,\qquad\qquad\qquad\qquad\qquad\quad\qquad= x^{n + 1} yv_{n + 1} (y,z)- r^{n - j + 1} x^j y v_j (y,z). 
	\end{split}
	\end{equation}
\end{theorem}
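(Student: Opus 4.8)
The plan is to deduce both summation identities from the elementary telescoping formula \eqref{tele_id1} by invoking the algebraic identities \eqref{eq.aklr76c} and \eqref{eq.iq7k21o} of Lemma 2, which have in effect already done all the work. Throughout I assume $r\neq 0$, so that dividing by powers of $r$ is legitimate; the value $r=0$ will be disposed of at the end.

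To prove \eqref{eq.teo1nc4}, I multiply both sides of \eqref{eq.aklr76c} through by $r^n$. The left-hand side then becomes precisely the summand $r^{n-k}x^k\big(rv_k(y,z)+(xy-2r)u_{k+1}(y,z)\big)$ appearing in \eqref{eq.teo1nc4}, while the right-hand side becomes $r^n y\big(\tfrac{x^{k+1}}{r^k}u_{k+1}(y,z)-\tfrac{x^k}{r^{k-1}}u_k(y,z)\big)$. Introducing the auxiliary sequence $f_k=\dfrac{x^k}{r^{k-1}}\,u_k(y,z)$, this reads $r^{n-k}x^k\big(rv_k+(xy-2r)u_{k+1}\big)=r^n y\,(f_{k+1}-f_k)$. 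Summing over $k=c,\dots,n$ and applying \eqref{tele_id1} yields $r^n y\,(f_{n+1}-f_c)$; since $r^n f_{n+1}=x^{n+1}u_{n+1}(y,z)$ and $r^n f_c=r^{n-c+1}x^c u_c(y,z)$, this is exactly the right-hand side of \eqref{eq.teo1nc4}.

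The identity \eqref{eq.y7m6w1x} is obtained in the same manner, starting from \eqref{eq.iq7k21o} in place of \eqref{eq.aklr76c} and using the sequence $f_k=\dfrac{x^k}{r^{k-1}}\,v_k(y,z)$; the telescoping step and the simplification of the boundary terms are formally identical. Finally, the excluded case $r=0$ follows by noting that both sides of each identity are polynomials in $r$, so equality for all $r\neq 0$ forces equality at $r=0$ as well (alternatively, only the $k=n$ summand survives and is checked directly). I do not expect any genuine obstacle here: once Lemma 2 is available the theorem is pure telescoping, and the only point demanding attention is the careful bookkeeping of the powers of $r$ in the two boundary terms $r^n f_{n+1}$ and $r^n f_c$.
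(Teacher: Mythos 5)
Your proof is correct and follows essentially the same route as the paper: sum the identities \eqref{eq.aklr76c} and \eqref{eq.iq7k21o} of Lemma 2 over $k=c,\dots,n$ and apply the telescoping formula \eqref{tele_id1}, which is precisely the paper's argument. Your explicit bookkeeping with $f_k = x^k u_k(y,z)/r^{k-1}$ and your separate treatment of $r=0$ (which Lemma 2 excludes but the theorem's statement allows) only add detail that the paper leaves implicit.
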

\begin{proof}
	To prove \eqref{eq.teo1nc4}, sum \eqref{eq.aklr76c} from $j$ to $n$, making use of \eqref{tele_id1}. The proof of \eqref{eq.y7m6w1x} is similar.
\end{proof}
\begin{remark}
	Equation \eqref{eq.teo1nc4} contains one result of Abd-Elhameed and Zeya\-da \cite[Theorem 1]{ameed_zeyada18} 
	as a special case at $r=1$ and $j=0$. Identity \eqref{eq.y7m6w1x} seems to be new.
\end{remark}
\begin{lemma}
	For all $x$, $y$, $r\in \mathbb{C}$ and $k$, $n\in\mathbb Z$, we have
	\begin{equation}\label{eq.o1cpjv7}
	\begin{split}
	x^{n - k} r^k & \left( {rv_{k + 1}(y,z) + (xy + 2rz)u_k(y,z) } \right)\\[4pt]
	& \qquad\qquad\qquad\qquad\qquad = y\left( {r^{k + 1} x^{n - k} u_{k + 1}(y,z) + r^k x^{n - k + 1} u_k(y,z) } \right)\!,
	\end{split}
	\end{equation}
	\begin{equation}\label{eq.k3qycpa}
	\begin{split}
	x^{n - k} r^k & \big( (y^2 - 4z)ru_{k + 1}(y,z) + (xy + 2rz)v_k(y,z)  \big)\\[4pt] 
	&\qquad\qquad\qquad\quad\quad = y\left( {r^{k + 1} x^{n - k} v_{k + 1}(y,z) + r^k x^{n - k + 1} v_k(y,z) } \right)\!.
	\end{split}
	\end{equation}
\end{lemma}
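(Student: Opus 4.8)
The plan is to reduce each of \eqref{eq.o1cpjv7} and \eqref{eq.k3qycpa} to a factor-free scalar identity in the $u$'s and $v$'s, exactly in the spirit of the proof of the preceding lemma. For \eqref{eq.o1cpjv7}, note that both sides carry the common factor $x^{n-k}r^k$, so it suffices to prove
\[
rv_{k+1}(y,z) + (xy+2rz)u_k(y,z) = y\bigl(ru_{k+1}(y,z) + xu_k(y,z)\bigr)
\]
and then multiply through by $x^{n-k}r^k$. Expanding the right-hand side and cancelling the common term $xyu_k(y,z)$, this is equivalent to $rv_{k+1}(y,z) + 2rzu_k(y,z) = ryu_{k+1}(y,z)$, i.e.\ to the relation $v_{k+1}(y,z) = yu_{k+1}(y,z) - 2zu_k(y,z)$.

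First I would establish this relation. Shifting the index in \eqref{eq.hain6x5} gives $v_{k+1}(y,z) = u_{k+2}(y,z) - zu_k(y,z)$, while the recurrence \eqref{eq.s4dauqs}, with $k$ replaced by $k+1$, gives $u_{k+2}(y,z) = yu_{k+1}(y,z) - zu_k(y,z)$; combining the two yields $v_{k+1}(y,z) = yu_{k+1}(y,z) - 2zu_k(y,z)$, valid for every $k\in\mathbb Z$ by the prescribed extension to negative subscripts. Substituting this back and simplifying proves \eqref{eq.o1cpjv7}.

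For \eqref{eq.k3qycpa} I would proceed in precisely the same way: after removing the common factor $x^{n-k}r^k$ and cancelling the term $xyv_k(y,z)$, the claim reduces to $(y^2-4z)u_{k+1}(y,z) + 2zv_k(y,z) = yv_{k+1}(y,z)$. This in turn follows from shifting the index in \eqref{eq.rmgib02}, namely $(y^2-4z)u_{k+1}(y,z) = v_{k+2}(y,z) - zv_k(y,z)$, together with the recurrence \eqref{eq.m94zp3w} written as $v_{k+2}(y,z) = yv_{k+1}(y,z) - zv_k(y,z)$. Multiplying the resulting scalar identity by $x^{n-k}r^k$ gives \eqref{eq.k3qycpa}.

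There is no real obstacle here; the argument is routine, telescoping-style algebra. The only points needing a little care are bookkeeping ones: checking that the index shifts in \eqref{eq.hain6x5}, \eqref{eq.s4dauqs}, \eqref{eq.rmgib02}, \eqref{eq.m94zp3w} remain valid for all integers $k$ (they do, thanks to the formulas given for $u_{-n}$ and $v_{-n}$), and tracking the powers of $x$ and $r$ when the scalar identities are multiplied back by $x^{n-k}r^k$. As an alternative, one can bypass the recurrences and verify $v_{k+1}(y,z)=yu_{k+1}(y,z)-2zu_k(y,z)$ and $(y^2-4z)u_{k+1}(y,z)=yv_{k+1}(y,z)-2zv_k(y,z)$ directly from the Binet formulas, which is equally short.
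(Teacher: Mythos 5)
Your proof is correct and follows essentially the same route as the paper: both reduce the identities, after stripping the common factor $x^{n-k}r^k$, to the relations $v_{k+1}(y,z)=yu_{k+1}(y,z)-2zu_k(y,z)$ and $(y^2-4z)u_{k+1}(y,z)=yv_{k+1}(y,z)-2zv_k(y,z)$, obtained by eliminating the higher-index term between \eqref{eq.s4dauqs} and \eqref{eq.hain6x5} (respectively \eqref{eq.m94zp3w} and \eqref{eq.rmgib02}), the only difference being a harmless shift of index.
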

\begin{proof}
	Eliminating $u_{k + 1}(y,z)$ between \eqref{eq.s4dauqs} and \eqref{eq.hain6x5} gives
	$
	v_k(y,z) \!=\! yu_k(y,z) - 2zu_{k - 1}(y,z)
	$
	and hence
	\[
	rv_k (y,z) + (xy + 2rz)u_{k - 1} (y,z) = ry u_k (y,z) + xyu_{k - 1} (y,z),
	\]
	from which \eqref{eq.o1cpjv7} follows. 
	
	The proof of \eqref{eq.k3qycpa} is similar; 
	eliminate $v_{k + 1}(y,z)$ between \eqref{eq.m94zp3w} and \eqref{eq.rmgib02}.
\end{proof}
\begin{theorem}\label{thm_Luc2}
	For all $x$, $y$, $r\in\mathbb{C}$ and $j$, $n\in\mathbb Z$,
	\begin{equation}\label{eq.d1bkssp}
	\begin{split}
	\sum_{k = j}^n ( - 1)^k & x^{n - k} r^k \big( {rv_{k + 1} (y,z) + (xy + 2rz)u_k (y,z)} \big) \\
	&\qquad\qquad\qquad= ( - 1)^n yr^{n + 1} u_{n + 1}(y,z)  + ( - 1)^j r^j yx^{n - j + 1} u_j(y,z),
	\end{split}
	\end{equation}
	\begin{equation}\label{eq.rht5dix}
	\begin{split}
	\sum_{k = j}^n ( - 1)^k &x^{n - k} r^k \left( {(y^2 - 4z)ru_{k + 1} (y,z) + (xy + 2rz)v_k (y,z)} \right) \\
	&\qquad\qquad\qquad= ( - 1)^n yr^{n + 1} v_{n + 1}(y,z)  + ( - 1)^j r^j yx^{n - j + 1} v_j(y,z) .
	\end{split}
	\end{equation}
\end{theorem}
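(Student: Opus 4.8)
The plan is to obtain both identities from the second Lemma above by a single telescoping step, exactly parallel to the way Theorem~\ref{thm_Luc} was deduced from the first Lemma. First I would fix $n\in\mathbb Z$ and introduce the auxiliary sequence
\[
f_k = r^k x^{n-k+1} u_k(y,z),\qquad k\in\mathbb Z,
\]
so that $f_{k+1} = r^{k+1} x^{n-k} u_{k+1}(y,z)$. With this notation the right-hand side of \eqref{eq.o1cpjv7} is precisely $y(f_{k+1}+f_k)$, hence \eqref{eq.o1cpjv7} may be read as
\[
x^{n-k} r^k\big(rv_{k+1}(y,z) + (xy+2rz)u_k(y,z)\big) = y\big(f_{k+1}+f_k\big).
\]

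Next I would multiply both sides by $(-1)^k$ and sum over $k$ from $c$ to $n$. The left-hand side becomes exactly the sum appearing on the left of \eqref{eq.d1bkssp}, while the right-hand side, after pulling out the constant factor $y$, is $y\sum_{k=c}^n (-1)^k(f_{k+1}+f_k)$. Now the telescoping identity \eqref{tele_id2} of the first Lemma applies verbatim and yields
\[
\sum_{k=c}^n (-1)^k(f_{k+1}+f_k) = (-1)^n f_{n+1} + (-1)^c f_c .
\]
It then only remains to substitute back the values $f_{n+1} = r^{n+1} x^{0} u_{n+1}(y,z) = r^{n+1} u_{n+1}(y,z)$ and $f_c = r^c x^{n-c+1} u_c(y,z)$, which reproduces the right-hand side of \eqref{eq.d1bkssp}. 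For \eqref{eq.rht5dix} I would run the identical argument with $g_k = r^k x^{n-k+1} v_k(y,z)$ in place of $f_k$, invoking \eqref{eq.k3qycpa} instead of \eqref{eq.o1cpjv7}.

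I do not expect a genuine obstacle here: once the Lemma is available, the entire content of the theorem is the observation that the right-hand sides of \eqref{eq.o1cpjv7} and \eqref{eq.k3qycpa} have the telescoping shape $y(f_{k+1}+f_k)$. The one point meriting a little care is that the fixed parameter $n$ enters $f_k$ through the power $x^{n-k+1}$, so one is telescoping an $n$-dependent sequence; this is harmless because $n$ is held constant throughout the summation, but it does mean the auxiliary sequence must be defined with $n$ as a silent parameter rather than, say, $f_k = r^k x^{-k+1} u_k$ up to an overall power of $x$. If anything in the surrounding development deserves a second look, it is the algebraic derivation of the Lemma itself — the elimination of $u_{k+1}(y,z)$, respectively $v_{k+1}(y,z)$, between the two auxiliary recurrences — but that verification has already been carried out above.
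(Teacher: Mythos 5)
Your proposal is correct and is essentially the paper's own proof: the paper likewise sums \eqref{eq.o1cpjv7} and \eqref{eq.k3qycpa} over $k$ from $c$ to $n$ and invokes the alternating telescoping identity \eqref{tele_id2}, since the right-hand sides have exactly the shape $y(f_{k+1}+f_k)$ with $f_k=r^kx^{n-k+1}u_k(y,z)$ (resp.\ $v_k$). Your explicit introduction of the $n$-dependent auxiliary sequence $f_k$ merely spells out the step the paper leaves implicit.
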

\begin{proof}
	Sum each of \eqref{eq.o1cpjv7} and \eqref{eq.k3qycpa} from $j$ to $n$, making use of \eqref{tele_id2} 
	since the right-hand side telescopes in each case.
\end{proof}
\begin{remark}
	Identity \eqref{eq.d1bkssp} is an additional polynomial generalization of \eqref{id_dpl}. 
	Identity \eqref{eq.rht5dix} is its companion and also presumably new.
\end{remark}

\section{Discussion}

This section is devoted to discuss in detail some special cases of Theorems \ref{thm_Luc} and \ref{thm_Luc2}, 
hereby paying special attention to Fibonacci and Lucas polynomials $F_n(x)$ and $L_n(x)$, respectively. 
The polynomial identities presented in Proposition \ref{main_prop_1} below are also special cases of 
Theorem~1 in \cite{Chung}. We think, however, that they are worth to be discussed.
In the course of discussion we will rediscover some identities from \cite{ameed_zeyada18} 
but also present new relations that we did not find in the literature and deserve recognition. 

First we note that $$F_n(x)=u_n(x,-1),\qquad L_n(x)=v_n(x,-1)$$ and $F_n(1)=F_n$, $L_n(1)=L_n$. Also,
$F_n(2)=P_n$ and $L_n(2)=Q_n$, where $P_n$ and $Q_n$ denote the Pell and Pell--Lucas numbers, respectively. 
Other special values of these polynomials will be used later. 
\begin{proposition}\label{main_prop_1}
	For all $x,y\in\mathbb{C}$ and $n\in\mathbb Z$, we have
	\begin{align}\label{main_Fib1}
	\sum_{k=0}^n x^k \big(L_k(y) + (xy - 2)F_{k+1}(y)\big) &= x^{n+1} y F_{n+1}(y),\\
	\label{main_Luc1}
	\sum_{k=0}^n x^k \big((y^2+4)F_k(y) + (xy - 2)L_{k+1}(y)\big) & = y \big (x^{n+1} L_{n+1}(y) - 2 \big ).
	\end{align}
\end{proposition}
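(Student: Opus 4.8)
The plan is to obtain Proposition~\ref{main_prop_1} as the specialization of Theorem~\ref{thm_Luc} at $z=-1$, $r=1$, and $c=0$. First I would recall the dictionary between the general Lucas sequences and the Fibonacci/Lucas polynomials stated at the start of the Discussion: $F_n(y)=u_n(y,-1)$ and $L_n(y)=v_n(y,-1)$, so that the discriminant term becomes $y^2-4z=y^2+4$. With $r=1$ the prefactors $r^{n-k}$ and $r^{n-c+1}$ collapse to $1$, and with $c=0$ the lower summation limit matches the statement.

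Next I would substitute these choices into \eqref{eq.teo1nc4}. The left-hand side becomes $\sum_{k=0}^n x^k\big(v_k(y,-1)+(xy-2)u_{k+1}(y,-1)\big)=\sum_{k=0}^n x^k\big(L_k(y)+(xy-2)F_{k+1}(y)\big)$, while the right-hand side is $x^{n+1}y\,u_{n+1}(y,-1)-x^0 y\,u_0(y,-1)$. Here the only point to check is that the boundary term vanishes: since $u_0(y,z)=0$ by definition, the subtracted term is $0$, leaving $x^{n+1}y\,F_{n+1}(y)$. This yields \eqref{main_Fib1}.

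Similarly, substituting $z=-1$, $r=1$, $c=0$ into \eqref{eq.y7m6w1x} gives on the left $\sum_{k=0}^n x^k\big((y^2+4)F_k(y)+(xy-2)L_{k+1}(y)\big)$ and on the right $x^{n+1}y\,v_{n+1}(y,-1)-x^0 y\,v_0(y,-1)$. This time the boundary value is $v_0(y,z)=2$, so the right-hand side equals $x^{n+1}y\,L_{n+1}(y)-2y=y\big(x^{n+1}L_{n+1}(y)-2\big)$, which is \eqref{main_Luc1}.

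There is essentially no obstacle here: once Theorem~\ref{thm_Luc} is available, the proposition is a routine substitution, the only subtlety being to track the two boundary values $u_0=0$ and $v_0=2$ correctly --- these are precisely what make the two right-hand sides look superficially different. For a self-contained alternative one could instead re-derive the $z=-1$ instance directly, summing \eqref{eq.aklr76c} and \eqref{eq.iq7k21o} (with $r=1$, $z=-1$) from $0$ to $n$ and invoking the telescoping identity \eqref{tele_id1}, but this merely reproduces the argument of Theorem~\ref{thm_Luc} in a special case.
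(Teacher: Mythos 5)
Your proposal is correct and follows exactly the paper's route: the paper proves the proposition by setting $r=1$ and $c=0$ (with $z=-1$, so that $u_n(y,-1)=F_n(y)$ and $v_n(y,-1)=L_n(y)$) in \eqref{eq.teo1nc4} and \eqref{eq.y7m6w1x}, just as you do. Your explicit tracking of the boundary values $u_0=0$ and $v_0=2$ is a nice clarification but does not change the argument.
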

\begin{proof}
	Set $r=1$ and $j=0$ in \eqref{eq.teo1nc4} and \eqref{eq.y7m6w1x}.
\end{proof}

Proposition \ref{main_prop_1} provides two polynomial extensions of the identity \eqref{id_edgar}. 
Both identities connect interestingly Fibonacci and Lucas polynomials. Sin\-ce $$L_n(y) = F_{n-1}(y) + F_{n+1}(y),\qquad (y^2+4)F_n(y) = L_{n-1}(y) + L_{n+1}(y)$$ 
they may also be disconnected:
\begin{equation*}
\sum_{k=0}^n x^k \big (xF_{k+1}(y) - F_{k}(y) \big ) = x^{n+1} F_{n+1}(y)
\end{equation*} 
and
\begin{equation*}
\sum_{k=0}^n x^k \big (xL_{k+1}(y) - L_{k}(y)\big ) = x^{n+1} L_{n+1}(y) - xy.
\end{equation*}
\begin{corollary}
	For all $x\in\mathbb{C}$ and $n\in\mathbb Z$,
	\begin{align}\label{not_new1}
	\sum_{k=0}^n x^k \big(L_k + (x-2)F_{k+1}\big) &= x^{n+1} F_{n+1},\\
	\sum_{k=0}^n x^k \big(5 F_k + (x-2)L_{k+1}\big) &= x^{n+1} L_{n+1} - 2,\nonumber\\
	\label{not_new2}
	\sum_{k=0}^n x^k \big(Q_k + 2(x-1)P_{k+1}\big) &= 2 x^{n+1} P_{n+1},\\
	\sum_{k=0}^n x^k \big(4 P_k + (x-1)Q_{k+1}\big) &= x^{n+1} Q_{n+1} - 2.\nonumber
	\end{align}
\end{corollary}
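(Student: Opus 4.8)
The plan is to obtain all four identities as specializations of Proposition \ref{main_prop_1}, since the corollary is nothing more than the two evaluations $y=1$ and $y=2$ of the polynomial identities \eqref{main_Fib1} and \eqref{main_Luc1}. No new machinery is needed; the whole proof is substitution, using the dictionary $F_n(1)=F_n$, $L_n(1)=L_n$, $F_n(2)=P_n$, $L_n(2)=Q_n$ recorded at the start of the section.

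First I would set $y=1$. Then $xy-2=x-2$ and $y^2+4=5$, so \eqref{main_Fib1} collapses immediately to \eqref{not_new1}, and \eqref{main_Luc1} collapses to $\sum_{k=0}^n x^k\big(5F_k+(x-2)L_{k+1}\big)=x^{n+1}L_{n+1}-2$, which is the second displayed identity.

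Next I would set $y=2$. Now $xy-2=2(x-1)$, $y^2+4=8$, and the right-hand side of \eqref{main_Fib1} becomes $x^{n+1}\cdot 2\cdot F_{n+1}(2)=2x^{n+1}P_{n+1}$; this turns \eqref{main_Fib1} directly into \eqref{not_new2}. For the last identity, \eqref{main_Luc1} with $y=2$ reads $\sum_{k=0}^n x^k\big(8P_k+2(x-1)Q_{k+1}\big)=2\big(x^{n+1}Q_{n+1}-2\big)$, and dividing both sides by $2$ gives $\sum_{k=0}^n x^k\big(4P_k+(x-1)Q_{k+1}\big)=x^{n+1}Q_{n+1}-2$.

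There is no genuine obstacle here: every step is a direct substitution into Proposition \ref{main_prop_1}. The only point that requires a moment's care is the common factor of $2$ appearing on both sides in the Pell--Lucas case, which must be cancelled to bring the identity into the stated form.
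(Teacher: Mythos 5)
Your proof is correct and is exactly the paper's own argument: substitute $y=1$ and $y=2$ into \eqref{main_Fib1} and \eqref{main_Luc1} and use $F_n(1)=F_n$, $L_n(1)=L_n$, $F_n(2)=P_n$, $L_n(2)=Q_n$, cancelling the common factor $2$ in the Pell--Lucas case. Nothing further is needed.
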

\begin{proof}
	Set $y=1$ and $y=2$, in turn, in \eqref{main_Fib1} and \eqref{main_Luc1}. 
\end{proof}
We point out that \eqref{not_new1} and \eqref{not_new2} are not new. These are Equations (11) and (15) in \cite{ameed_zeyada18}. 
\begin{corollary}\label{cor_mark}
	For all $x\in\mathbb{C}$ and $n\in\mathbb Z$,
	\begin{align*}
	&\quad\sum_{k=0}^n x^k \big(L_{3k} + (2x-1)F_{3k+3}\big) = 2 x^{n+1} F_{3n+3},\\
	&\sum_{k=0}^n x^k \big(5 F_{3k} + (2x-1)L_{3k+3}\big) = 2 x^{n+1} L_{3n+3} - 4.
	\end{align*}
\end{corollary}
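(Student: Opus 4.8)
The plan is to recognize the trisection subsequences $(F_{3k})_{k\ge 0}$ and $(L_{3k})_{k\ge 0}$ as (generalized) Lucas polynomial sequences evaluated at a fixed argument, so that Corollary~\ref{cor_mark} becomes a direct specialization of Proposition~\ref{main_prop_1}.

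First I would record the relevant multisection fact. Writing $\alpha,\beta$ for the zeros of $x^2-x-1$, we have $\alpha\beta=-1$, hence $(\alpha\beta)^3=-1$, while $\alpha^3+\beta^3=L_3=4$; thus $\alpha^3$ and $\beta^3$ are the zeros of $t^2-4t-1$, which is precisely the characteristic polynomial of the Lucas sequences $u_k(4,-1)=F_k(4)$ and $v_k(4,-1)=L_k(4)$. Comparing Binet forms then gives
\[
F_{3k}=\frac{\alpha^{3k}-\beta^{3k}}{\alpha-\beta}=\frac{\alpha^3-\beta^3}{\alpha-\beta}\cdot\frac{\alpha^{3k}-\beta^{3k}}{\alpha^3-\beta^3}=F_3\,F_k(4)=2F_k(4),\qquad L_{3k}=\alpha^{3k}+\beta^{3k}=L_k(4).
\]

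Next I would apply Proposition~\ref{main_prop_1} with $y=4$. Identity~\eqref{main_Fib1} becomes $\sum_{k=0}^n x^k\big(L_k(4)+(4x-2)F_{k+1}(4)\big)=4x^{n+1}F_{n+1}(4)$; substituting $L_k(4)=L_{3k}$, $F_{k+1}(4)=F_{3k+3}/2$ and $F_{n+1}(4)=F_{3n+3}/2$ and cancelling the common factor $2$ yields the first identity. For the second, identity~\eqref{main_Luc1} with $y=4$ (so that $y^2+4=20$) reads $\sum_{k=0}^n x^k\big(20F_k(4)+(4x-2)L_{k+1}(4)\big)=4\big(x^{n+1}L_{n+1}(4)-2\big)$; the same substitutions give $\sum_{k=0}^n x^k\big(10F_{3k}+(4x-2)L_{3k+3}\big)=4x^{n+1}L_{3n+3}-8$, and dividing through by $2$ produces the claimed formula.

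I expect no genuine obstacle here: the only step needing care is the identification of the trisection subsequences with $F_k(4)$ and $L_k(4)$ via the factorization $t^2-4t-1=(t-\alpha^3)(t-\beta^3)$; after that everything is bookkeeping with the constants $F_3=2$ and $y^2+4=20$. One could equally well invoke Theorem~\ref{thm_Luc} directly with $(y,z)=(4,-1)$, $r=1$, $c=0$, obtaining the two identities in exactly the same way.
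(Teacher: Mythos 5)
Your proposal is correct and follows the paper's own route: specialize Proposition~\ref{main_prop_1} (equivalently Theorem~\ref{thm_Luc} with $r=1$, $c=0$, $z=-1$) at $y=4$ and use the evaluations $F_k(4)=F_{3k}/2$, $L_k(4)=L_{3k}$, then simplify the constants. The only difference is that you also verify these evaluations via the Binet forms and the factorization $t^2-4t-1=(t-\alpha^3)(t-\beta^3)$, a step the paper states without proof; your bookkeeping with $F_3=2$ and $y^2+4=20$ checks out.
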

\begin{proof}
	Set $y=4$ in \eqref{main_Fib1} and \eqref{main_Luc1}, respectively, and use the evaluations $F_n(4)=\frac12F_{3n}$ as well as $L_n(4)=L_{3n}$.
\end{proof}
\begin{corollary}
	For all $x\in\mathbb{C}$ and $n\in\mathbb Z$,
	\begin{align*}
	&\quad\qquad\qquad\sum_{k=0}^n 2^k x^{n-k} L_k(x) = 2^{n+1} F_{n+1}(x),\\
	&\sum_{k=0}^n 2^k x^{n-k} F_k(x) = \frac{2}{x^2+4}\Big ( 2^{n} L_{n+1}(x)-x^{n+1}\Big),\quad x\neq \pm 2i.
	\end{align*}
\end{corollary}
\begin{proof}
	Set $x=\frac{2}{y}$ in \eqref{main_Fib1} and \eqref{main_Luc1}, respectively. 
\end{proof}

The results in Corollaries \ref{main_cor_2}\,--\,\ref{cor.x2hi58v} are identities involving only even-inde\-xed Fibonacci and Lucas polynomials.
\begin{corollary}\label{main_cor_2}
	For all $x,y\in\mathbb{C}$ and $n\in\mathbb Z$, we have
	\begin{equation}\label{main_Fib2}
	\sum_{k=0}^n x^k \big (y L_{2k}(y) + (x(y^2+2) - 2)F_{2k+2}(y) \big ) = x^{n+1} (y^2+2) {F_{2n+2}(y)},
	\end{equation}
	\begin{equation}\label{main_Luc2}
	\sum_{k=0}^n x^k \big ( y(y^2+4) F_{2k}(y) + \big(x(y^2+2) - 2\big)L_{2k+2}(y) \big ) = (y^2+2)\big ( x^{n+1} L_{2n+2}(y) - 2\big).
	\end{equation}
\end{corollary}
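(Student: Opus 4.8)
The plan is to deduce both identities from Theorem~\ref{thm_Luc} by a change of parameters that turns even-indexed Lucas-sequence terms into ordinary Lucas-sequence terms. The key preliminary fact is the duplication relation: if $\tau,\sigma$ are the roots of $X^2-yX+z$, then $\tau^2,\sigma^2$ are the roots of $X^2-(y^2-2z)X+z^2$, so that from the Binet formulas together with $\tau^{2n}-\sigma^{2n}=(\tau^n-\sigma^n)(\tau^n+\sigma^n)$ and $\tau^2-\sigma^2=y(\tau-\sigma)$ one obtains
\[
u_{2n}(y,z)=y\,u_n(y^2-2z,z^2),\qquad v_{2n}(y,z)=v_n(y^2-2z,z^2).
\]
Specialising $z=-1$ and recalling $F_n(y)=u_n(y,-1)$, $L_n(y)=v_n(y,-1)$, this reads
\[
F_{2n}(y)=y\,u_n(y^2+2,1),\qquad L_{2n}(y)=v_n(y^2+2,1),
\]
and likewise with $n$ replaced by $n+1$ or $k+1$.

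To prove \eqref{main_Fib2}, I would apply \eqref{eq.teo1nc4} with the substitution $(y,z,r,c)\mapsto(y^2+2,\,1,\,1,\,0)$, so that its left side becomes $\sum_{k=0}^n x^k\bigl(v_k(y^2+2,1)+(x(y^2+2)-2)u_{k+1}(y^2+2,1)\bigr)$ and, since $u_0=0$, its right side becomes $x^{n+1}(y^2+2)u_{n+1}(y^2+2,1)$. Substituting $v_k(y^2+2,1)=L_{2k}(y)$, $u_{k+1}(y^2+2,1)=F_{2k+2}(y)/y$ and $u_{n+1}(y^2+2,1)=F_{2n+2}(y)/y$, and then clearing the denominator by multiplying through by $y$, yields precisely \eqref{main_Fib2}.

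For \eqref{main_Luc2} I would proceed identically starting from \eqref{eq.y7m6w1x} under the same substitution, using in addition that the discriminant here is $(y^2+2)^2-4=y^2(y^2+4)$ and that the boundary term on the right is $r^{n-c+1}x^c y\,v_c(y,z)\mapsto(y^2+2)\,v_0(y^2+2,1)=2(y^2+2)$. After replacing $u_k,v_{k+1},v_{n+1}$ by the corresponding even-indexed polynomials and simplifying $y^2(y^2+4)\cdot F_{2k}(y)/y=y(y^2+4)F_{2k}(y)$, one obtains \eqref{main_Luc2}.

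The only delicate points are bookkeeping: tracking the single factor $y$ introduced by the duplication formula (which is exactly why \eqref{main_Fib2} carries $yL_{2k}(y)$ rather than $L_{2k}(y)$, and why the extra $y$ appears in the coefficients of $F_{2k}(y)$ in \eqref{main_Luc2}), and observing that the intermediate division by $y$ is harmless, since for $y=0$ every even-indexed $F_{2k}(y)$ vanishes and both identities collapse to $0=0$ (equivalently, both sides are entire in $y$ and agree for $y\neq0$). Beyond this I expect no obstacle: the argument is a clean reparametrisation followed by a direct appeal to Theorem~\ref{thm_Luc}.
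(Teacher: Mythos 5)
Your proposal is correct, and it reaches the corollary by a genuinely different route than the paper. The paper stays inside the Fibonacci--Lucas polynomial world: it takes Proposition~\ref{main_prop_1} (the $z=-1$, $r=1$, $c=0$ case) and substitutes the imaginary argument $y\mapsto i(y^2+2)$, using $L_n(i(y^2+2))=i^nL_{2n}(y)$ and $F_n(i(y^2+2))=i^{n-1}F_{2n}(y)/y$, then cleans up the powers of $i$ by replacing $x$ with $x/i$. You instead go back to Theorem~\ref{thm_Luc} in its full two-parameter generality and use the (correct) duplication relations $u_{2n}(y,z)=y\,u_n(y^2-2z,z^2)$, $v_{2n}(y,z)=v_n(y^2-2z,z^2)$, specialized to $z=-1$, so that the corollary is just \eqref{eq.teo1nc4} and \eqref{eq.y7m6w1x} evaluated at the real parameters $(y^2+2,\,1,\,1,\,0)$; your arithmetic checks out, including $(y^2+2)^2-4=y^2(y^2+4)$ and the boundary terms $u_0=0$, $v_0=2$. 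What your route buys is the avoidance of all complex-argument bookkeeping (no $i^n$ tracking, no $x\mapsto x/i$ step) and it exhibits the corollary as a direct instance of the theorem with $z=1$ rather than $z=-1$; what the paper's route buys is brevity, since the composition identities for $F_n$ and $L_n$ at the argument $i(y^2+2)$ are quoted ready-made. You also explicitly dispose of the division by $y$ via the polynomial-identity argument at $y=0$, a point the paper's proof (whose substitution likewise carries a $1/y$) passes over in silence; that is a small but genuine improvement in care rather than a gap on your side.
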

\begin{proof} We only prove \eqref{main_Fib2}. Use Proposition \ref{main_prop_1} and relations
	$$	L_n\big(i(x^2+2)\big) = i^n L_{2n}(x),\qquad F_n \big(i(x^2+2)\big) = i^{n-1} \frac{F_{2n}(x)}{x},$$
	where $i=\sqrt{-1}$. When simplifying replace $x$ by $\frac{x}{i}$. The other proof is similar.
\end{proof}
\begin{corollary}
	For all $x\in\mathbb{C}$ and $n\in\mathbb Z$,
	\begin{align*}\label{Fib_even}
	\sum_{k=0}^n x^k \big(L_{2k} + (3x-2)F_{2k+2}\big) &= 3 x^{n+1} F_{2n+2},\\
	%\label{Luc_even}
	\sum_{k=0}^n x^k \big(5 F_{2k} + (3x-2)L_{2k+2}\big) &= 3\big (x^{n+1} L_{2n+2} - 2 \big ),\\
	\sum_{k=0}^n x^k \big(Q_{2k} + (3x-1)P_{2k+2}\big) &= 3 x^{n+1} P_{2n+2},\\
	\sum_{k=0}^n x^k \big(8 P_{2k} + (3x-1)Q_{2k+2}\big) &= 3\big ( x^{n+1} Q_{2n+2} - 2 \big).
	\end{align*}
\end{corollary}
\begin{proof}
	Set $y=1$ and $y=2$ in \eqref{main_Fib2} and \eqref{main_Luc2}, respectively. 
\end{proof}
\begin{corollary}
	For all $x\in\mathbb{C}$ and $n\in\mathbb Z$,
	\begin{align*}
	&\quad\,\,\, \sum_{k=0}^n x^k \big(3L_{4k} + (7x-2)F_{4k+4}\big) = {7} x^{n+1} F_{4n+4},\\
	&\sum_{k=0}^n x^k \big(15 F_{4k} + (7x-2)L_{4k+4}\big) = 7 (x^{n+1} L_{4n+4} - 2).
	\end{align*}
\end{corollary}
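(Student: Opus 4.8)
The plan is to specialize the even-index identities of Corollary~\ref{main_cor_2} at $y=\sqrt5$. The shape $7x-2$ of the coefficient already signals this choice, since $x(y^2+2)-2=7x-2$ precisely when $y^2+2=7$, and then the factor $y^2+2$ on the right-hand sides of \eqref{main_Fib2} and \eqref{main_Luc2} produces the leading $7$. (One could alternatively apply an index-doubling substitution directly to Proposition~\ref{main_prop_1}, but routing through Corollary~\ref{main_cor_2} is shorter.)

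The one nontrivial ingredient is the evaluation of $F_m(y)$ and $L_m(y)$ at $y=\sqrt5$ on \emph{even} indices. From the Binet forms for the ordinary Fibonacci and Lucas numbers, $\alpha-\beta=\sqrt5$, $\alpha+\beta=1$, $\alpha\beta=-1$, so the zeros of $t^2-\sqrt5\,t-1$ are $\alpha^2$ and $-\beta^2$ (indeed $\alpha^2\cdot(-\beta^2)=-1$ and $\alpha^2-\beta^2=(\alpha-\beta)(\alpha+\beta)=\sqrt5$). Since also $\alpha^2+\beta^2=L_2=3$, the Binet formulas for the Lucas polynomials give, for every even $m$,
\[
L_m(\sqrt5)=\alpha^{2m}+\beta^{2m}=L_{2m},\qquad
F_m(\sqrt5)=\frac{\alpha^{2m}-\beta^{2m}}{\alpha^2+\beta^2}=\frac{\sqrt5}{3}\,F_{2m}.
\]
For odd $m$ a stray $(-1)^m$ appears and interchanges the roles of $L$ and $F$; this is exactly why this family of corollaries is phrased only with the even indices $2k$ and $2k+2$.

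Now set $y=\sqrt5$, so $y^2+2=7$ and $y^2+4=9$. In \eqref{main_Fib2} the summand becomes $x^k\big(\sqrt5\,L_{4k}+(7x-2)\tfrac{\sqrt5}{3}F_{4k+4}\big)$ and the right-hand side becomes $7x^{n+1}\tfrac{\sqrt5}{3}F_{4n+4}$; multiplying through by $3/\sqrt5$ gives the first claimed identity. In \eqref{main_Luc2} the coefficient of $F_{4k}$ is $\sqrt5\cdot 9\cdot\tfrac{\sqrt5}{3}=15$, while $L_{2k+2}(\sqrt5)=L_{4k+4}$ and the right-hand side is $(y^2+2)\big(x^{n+1}L_{2n+2}(\sqrt5)-2\big)=7\big(x^{n+1}L_{4n+4}-2\big)$, which is the second identity with no further rescaling.

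The only step requiring any care is the even-index evaluation above: identifying the zeros of $t^2-\sqrt5\,t-1$ as $\alpha^2,-\beta^2$ and checking that the resulting sign $(-1)^m$ is harmless on the even indices that occur. Everything else is routine bookkeeping of the constants $3$, $7$, $9$, $15$.
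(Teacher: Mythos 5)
Your proof is correct and follows essentially the same route as the paper: set $y=\sqrt5$ in \eqref{main_Fib2} and \eqref{main_Luc2} and use the evaluations $L_{2k}(\sqrt5)=L_{4k}$ and $F_{2k}(\sqrt5)=\tfrac{\sqrt5}{3}F_{4k}$. The only difference is that you derive these evaluations from the Binet forms (via the roots $\alpha^2$, $-\beta^2$ of $t^2-\sqrt5\,t-1$), whereas the paper simply quotes them; your check of the signs at even indices is accurate.
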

\begin{proof}
	Set $y=\sqrt{5}$ in \eqref{main_Fib2} and \eqref{main_Luc2}, respectively, and use the evaluations 
	$F_{2n}(\sqrt{5})=\frac{\sqrt{5}}{3} F_{4n}$ as well as $L_{2n}(\sqrt{5})=L_{4n}$.
\end{proof}
\begin{corollary}\label{cor.x2hi58v}
	For all $x\in\mathbb{C}\setminus\!\{0\}$ and $n\in\mathbb Z$,
	\begin{equation*}
	\sum_{k=0}^n \Big (\frac{x^2+2}{2}\Big )^{n-k} L_{2k}(x) = \frac{2}{x} F_{2n+2}(x), 
	\end{equation*}
	\begin{equation*}
	\sum_{k=0}^n \Big (\frac{x^2+2}{2}\Big )^{n-k} F_{2k}(x) = \frac{2}{x(x^2+4)} \bigg (L_{2n+2}(x)-2 \Big( \frac{x^2+2}{2}\Big )^{n+1}\bigg ).
	\end{equation*}
\end{corollary}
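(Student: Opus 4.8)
The plan is to derive both identities from Corollary~\ref{main_cor_2} by the one specialization that annihilates the ``mixed'' term in each summand. In \eqref{main_Fib2} and \eqref{main_Luc2} the bracketed coefficient multiplying the shifted polynomial is $x(y^2+2)-2$, so I would set $x=\tfrac{2}{y^2+2}$. This is exactly the substitution that, in the same spirit as the earlier corollary using $x=2/y$ in \eqref{main_Fib1}--\eqref{main_Luc1}, turns these weighted Fibonacci--Lucas sums into single-sequence sums. Making the choice already requires $y^2+2\neq0$, i.e.\ $y\neq\pm\sqrt2\,i$, one of the excluded arguments.

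With $x=\tfrac{2}{y^2+2}$, identity \eqref{main_Fib2} collapses to
\[
\sum_{k=0}^n\Big(\frac{2}{y^2+2}\Big)^k y\,L_{2k}(y)=\Big(\frac{2}{y^2+2}\Big)^{n+1}(y^2+2)\,F_{2n+2}(y),
\]
and the right-hand side simplifies via $\big(\tfrac{2}{y^2+2}\big)^{n+1}(y^2+2)=\tfrac{2^{n+1}}{(y^2+2)^n}$; dividing by $y$ (legitimate since $y\neq0$) and relabeling $y$ as $x$ gives the first asserted identity. The second identity is obtained the same way from \eqref{main_Luc2}: the substitution kills the $L_{2k+2}$ term, leaving
\[
\sum_{k=0}^n\Big(\frac{2}{y^2+2}\Big)^k y(y^2+4)\,F_{2k}(y)=(y^2+2)\bigg(\Big(\frac{2}{y^2+2}\Big)^{n+1}L_{2n+2}(y)-2\bigg),
\]
and dividing through by $y(y^2+4)$ — which forces the extra exclusion $y\neq\pm2i$ — and relabeling yields the claim.

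I expect essentially no obstacle: the argument is a direct substitution followed by elementary algebra. The only genuine care needed is (i) picking the value of $x$ that kills the cross term, and (ii) tracking the excluded arguments so that every division is justified and the ratio $2/(x^2+2)$ is defined — namely $x=0$ (to divide by $y$), $x=\pm\sqrt2\,i$ (so $y^2+2\neq0$), and $x=\pm2i$ (so $y^2+4\neq0$) — which together produce the stated domain $\mathbb{C}\setminus\{0;\pm2i;\pm\sqrt2\,i\}$.
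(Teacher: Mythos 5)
Your proof is correct and is exactly the paper's argument: the authors also prove Corollary~\ref{cor.x2hi58v} by setting $x=2/(y^2+2)$ in \eqref{main_Fib2} and \eqref{main_Luc2} (killing the cross term) and then relabeling $y$ as $x$. Your additional bookkeeping of the excluded values $0,\pm\sqrt2\,i,\pm 2i$ matches the stated domain and is a welcome bit of extra care.
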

\begin{proof}
	Set $x=\frac{2}{y^2+2}$ in \eqref{main_Fib2} and \eqref{main_Luc2}, respectively, and replace $y$ by~$x$. 
\end{proof}

The next identities provide still other interesting relations that nicely generalize and complement Edgar--Bhatnagar identity \eqref{id_edgar}.
\begin{proposition}\label{FmLm_ids}
	For all $x\in\mathbb{C}$ and integers $m$ and $n$, we have
	\begin{align}\label{main_Fib3}
	&\qquad \sum_{k=0}^n x^k \big (F_mL_{mk} + (x L_m - 2){F_{m(k+1)}}\big ) = x^{n+1}  {F_{m(n+1)}}L_m,\\
	&\sum_{k=0}^n x^k \big ( 5 F_m F_{mk} + (x L_m - 2)L_{m(k+1)} \big ) =  \big ( x^{n+1} L_{m(n+1)} - 2 \big)L_m.\nonumber
	\end{align}
\end{proposition}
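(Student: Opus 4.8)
The plan is to obtain both identities by specializing Theorem \ref{thm_Luc}. The key observation is that the subsequences $\big(F_{mk}\big)_{k\in\mathbb Z}$ and $\big(L_{mk}\big)_{k\in\mathbb Z}$ are, up to the constant factor $F_m$ in the first case, the Lucas sequences of the first and second kind attached to the parameter pair $(y,z)=(L_m,(-1)^m)$. Indeed, $\alpha^m$ and $\beta^m$ are the two roots of $t^2-L_mt+(-1)^m$, since $\alpha^m+\beta^m=L_m$ and $\alpha^m\beta^m=(\alpha\beta)^m=(-1)^m$; hence, reading the Binet formulas for the Lucas sequences with $\tau=\alpha^m$ and $\sigma=\beta^m$, one gets, for $m\neq0$,
\[
u_k\big(L_m,(-1)^m\big)=\frac{\alpha^{mk}-\beta^{mk}}{\alpha^m-\beta^m}=\frac{F_{mk}}{F_m},\qquad v_k\big(L_m,(-1)^m\big)=\alpha^{mk}+\beta^{mk}=L_{mk},
\]
together with $y^2-4z=L_m^2-4(-1)^m=5F_m^2$, from the standard relation $L_m^2-5F_m^2=4(-1)^m$.

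With this dictionary in hand, \eqref{main_Fib3} follows by putting $r=1$ and $c=0$ in \eqref{eq.teo1nc4}: the boundary term on the right drops out because $u_0=0$, and substituting $v_k=L_{mk}$, $u_{k+1}=F_{m(k+1)}/F_m$, $u_{n+1}=F_{m(n+1)}/F_m$ and then multiplying through by $F_m$ reproduces \eqref{main_Fib3} verbatim. The second identity comes out the same way from \eqref{eq.y7m6w1x} with $r=1$, $c=0$: now the boundary value $v_0=2$ supplies the term $-2L_m$, the coefficient of $u_k$ becomes $(y^2-4z)u_k=5F_m^2\cdot(F_{mk}/F_m)=5F_mF_{mk}$, and $v_{k+1}=L_{m(k+1)}$, $v_{n+1}=L_{m(n+1)}$, which leaves exactly $L_m\big(x^{n+1}L_{m(n+1)}-2\big)$ on the right.

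I do not anticipate a genuine obstacle; the argument is a direct substitution, and the only points deserving care are the sign $z=(-1)^m$ (so that the characteristic polynomial is really the one governing $(F_{mk})_k$ and $(L_{mk})_k$) and the evaluation $y^2-4z=5F_m^2$. The one case outside the Binet identification is $m=0$, where $F_m=0$; there both asserted identities collapse to elementary telescoping statements — the left-hand sides become $\sum_{k=0}^n x^k\cdot 0=0=x^{n+1}L_0F_0$ and $\sum_{k=0}^n x^k\cdot 4(x-1)=4\big(x^{n+1}-1\big)=L_0\big(x^{n+1}L_0-2\big)$ — so they hold trivially and can be dismissed in one line. Alternatively, the case split can be avoided altogether by first proving $F_{mk}=F_mu_k\big(L_m,(-1)^m\big)$ and $L_{mk}=v_k\big(L_m,(-1)^m\big)$ for all integers $m$ by induction on $k$, via the recurrences $F_{m(k+1)}=L_mF_{mk}-(-1)^mF_{m(k-1)}$ and $L_{m(k+1)}=L_mL_{mk}-(-1)^mL_{m(k-1)}$, and then proceeding exactly as above.
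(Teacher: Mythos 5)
Your proof is correct, but it takes a different route from the paper. You specialize the general Theorem \ref{thm_Luc} directly at $(y,z)=(L_m,(-1)^m)$, $r=1$, $c=0$, using the Binet identification $u_k(L_m,(-1)^m)=F_{mk}/F_m$, $v_k(L_m,(-1)^m)=L_{mk}$ (from $\tau=\alpha^m$, $\sigma=\beta^m$) and $y^2-4z=5F_m^2$; this treats all $m$ uniformly, with only the degenerate case $m=0$ (where $F_m=0$ and the discriminant vanishes) checked separately or absorbed by your induction variant. The paper instead stays inside the Fibonacci--Lucas polynomial setting of Proposition \ref{main_prop_1} (i.e., $z=-1$ only) and compensates for the sign $(-1)^m$ by a parity split: it evaluates the polynomials at $y=L_m$ for $m$ odd and at $y=iL_m$ for $m$ even, using $L_k(L_m)=L_{mk}$, $F_k(L_m)=F_{mk}/F_m$ respectively $L_k(iL_m)=i^kL_{mk}$, $F_k(iL_m)=i^{k-1}F_{mk}/F_m$, and then replaces $x$ by $x/i$. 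Your approach buys uniformity (no case distinction on the parity of $m$, no imaginary arguments, and an honest treatment of $m=0$, which the paper's even-$m$ evaluation $F_k(iL_m)=i^{k-1}F_{mk}/F_m$ does not cover) and it proves both identities by the same substitution; the paper's approach buys the intermediate polynomial evaluation formulas $F_k(L_m)$, $L_k(L_m)$, which fit the style of its other corollaries derived from Proposition \ref{main_prop_1}. Both arguments ultimately rest on the same telescoping theorem, so the difference is one of parametrization rather than of substance.
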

\begin{proof}
	We only prove \eqref{main_Fib3}. Let $m$ be odd. Then by straightforward calculation
	$$	L_k(L_m) = L_{mk} \text{\quad and\quad} F_k(L_m) = \frac{F_{mk}}{F_m},$$ 
	where we used the identity $L_n^2 = 5F_n^2 + (-1)^n 4$. This immediately gives the first identity, using \eqref{main_Fib1}. 
	Let $m$ be even now. Then we work with $i L_m$ as an argument and get
	$$L_k(i L_m) = i^k L_{mk} \text{\quad    and\quad} F_k(i L_m) = i^{k-1} \frac{F_{mk}}{F_m}.$$ 
	Inserting these results into \eqref{main_Fib1} and finally replacing $x$ by $\frac{x}{i}$ completes the proof.
\end{proof}

We proceed with a (short) discussion of Theorem \ref{thm_Luc2}. Making the choices $z=-1$, $r=1$ and $j=0$ it becomes 
the next identities for Fibonacci and Lucas polynomials, which can also be obtained from Theorem 1 in \cite{Chung}.
\begin{proposition}
	For all $x$, $y\in\mathbb{C}$ and $n\in\mathbb Z$, we have
	\begin{equation}\label{main_Fib4}
	\sum_{k=0}^n (-1)^{n-k} x^{n-k} \big(L_{k+1}(y) + (xy - 2)F_{k}(y)\big) = y F_{n+1}(y),
	\end{equation}
	\begin{equation}\label{main_Luc4}
	\sum_{k=0}^n (-1)^{n-k} x^{n-k} \big((y^2+4)F_{k+1}(y) + (xy - 2)L_{k}(y)\big)
	=  y L_{n+1}(y) + 2(-1)^{n}y x^{n+1}.
	\end{equation}
\end{proposition}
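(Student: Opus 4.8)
The plan is to obtain both identities as the special case $z=-1$, $r=1$, $c=0$ of Theorem~\ref{thm_Luc2}, exactly as announced in the text preceding the statement. First I would recall the dictionary $F_n(y)=u_n(y,-1)$ and $L_n(y)=v_n(y,-1)$, together with the discriminant specialization $y^2-4z=y^2+4$ and the initial values $u_0(y,-1)=0$ and $v_0(y,-1)=2$.

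For \eqref{main_Fib4}, I would start from \eqref{eq.d1bkssp} and substitute $r=1$, $c=0$, $z=-1$. The summand $r^k\big(rv_{k+1}(y,z)+(xy+2rz)u_k(y,z)\big)$ collapses to $L_{k+1}(y)+(xy-2)F_k(y)$, so the left-hand side is precisely the left-hand side of \eqref{main_Fib4}. On the right-hand side, the boundary term $(-1)^c r^c y x^{n-c+1}u_c(y,z)$ becomes $yx^{n+1}u_0(y,-1)=0$, leaving $(-1)^n y u_{n+1}(y,-1)=(-1)^n yF_{n+1}(y)$, as claimed.

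For \eqref{main_Luc4}, I would run the same substitution in \eqref{eq.rht5dix}. The summand becomes $(y^2+4)F_{k+1}(y)+(xy-2)L_k(y)$, matching the left-hand side, while on the right the boundary term $(-1)^c r^c y x^{n-c+1}v_c(y,z)$ now equals $yx^{n+1}v_0(y,-1)=2yx^{n+1}$ because $v_0=2$ rather than $0$; this is exactly the extra summand that distinguishes \eqref{main_Luc4} from \eqref{main_Fib4}. Combined with $(-1)^n y v_{n+1}(y,-1)=(-1)^n yL_{n+1}(y)$ this produces the stated right-hand side.

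There is essentially no obstacle here: the proof is a routine specialization, and the only point requiring a moment's care is keeping track of the non-vanishing initial value $v_0=2$, which is what generates the $2yx^{n+1}$ term in the Lucas case (and which is absent in the Fibonacci case precisely because $u_0=0$). If a self-contained argument were preferred, one could instead specialize the lemma identities \eqref{eq.o1cpjv7} and \eqref{eq.k3qycpa} to $z=-1$, $r=1$ and telescope directly via \eqref{tele_id2}; but invoking Theorem~\ref{thm_Luc2} is the cleanest route.
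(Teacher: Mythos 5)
Your proposal is correct and is exactly the paper's route: the paper introduces this proposition with the remark that making the choices $z=-1$, $r=1$, $c=0$ in Theorem~\ref{thm_Luc2} yields these identities, which is precisely your specialization of \eqref{eq.d1bkssp} and \eqref{eq.rht5dix}, including the observation that $u_0=0$ kills the boundary term while $v_0=2$ produces the extra $2yx^{n+1}$.
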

\begin{corollary}
	For all $x\in\mathbb{C}$ and $n\in\mathbb Z$,
	\begin{align*}
	\sum_{k=0}^n (-1)^{n-k} x^{n-k} \big(L_{k+1} + (x-2)F_{k}\big) &=  F_{n+1}, \\
	\sum_{k=0}^n (-1)^{n-k} x^{n-k} \big(5 F_{k+1} + (x-2)L_{k}\big) &=  L_{n+1} + 2 (-1)^{n}x^{n+1}, \\
	\sum_{k=0}^n (-1)^{n-k} x^{n-k} \big(Q_{k+1} + 2(x-1)P_{k}\big) &= 2  P_{n+1},  \\
	\sum_{k=0}^n (-1)^{n-k} x^{n-k} \big(4 P_{k+1} + (x-1)Q_{k}\big) &= Q_{n+1} + 2 (-1)^{n}x^{n+1}. 
	\end{align*}
\end{corollary}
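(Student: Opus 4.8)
The plan is to obtain all four identities by specializing the two polynomial identities \eqref{main_Fib4} and \eqref{main_Luc4} of the preceding proposition at $y=1$ and $y=2$, and then invoking the elementary polynomial evaluations recorded at the start of Section~3.

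First I would recall that $F_n(1)=F_n$, $L_n(1)=L_n$, $F_n(2)=P_n$ and $L_n(2)=Q_n$, where $P_n$ and $Q_n$ are the Pell and Pell--Lucas numbers. Substituting $y=1$ into \eqref{main_Fib4} replaces $L_{k+1}(y),F_k(y),F_{n+1}(y)$ by $L_{k+1},F_k,F_{n+1}$, turns the coefficient $xy-2$ into $x-2$, and turns the prefactor $y$ on the right into $1$; this is the first identity. Doing the same in \eqref{main_Luc4} with $y=1$ gives the Lucas analogues, replaces $y^2+4$ by $5$, and leaves the term $2yx^{n+1}=2x^{n+1}$, producing the second identity.

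Next I would set $y=2$. In \eqref{main_Fib4} this gives $L_{k+1}(2)=Q_{k+1}$, $F_k(2)=P_k$, $F_{n+1}(2)=P_{n+1}$, while $xy-2=2x-2=2(x-1)$ and the right-hand prefactor becomes $2$; this is the third identity. In \eqref{main_Luc4} with $y=2$ we get $y^2+4=8$, $L_k(2)=Q_k$, $F_{k+1}(2)=P_{k+1}$, $L_{n+1}(2)=Q_{n+1}$, $xy-2=2(x-1)$ and $2yx^{n+1}=4x^{n+1}$, so the identity reads $\sum_{k=0}^n(-1)^kx^{n-k}\big(8P_{k+1}+2(x-1)Q_k\big)=2(-1)^nQ_{n+1}+4x^{n+1}$; dividing through by $2$ yields the fourth identity.

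There is essentially no obstacle: each step is a direct substitution followed by trivial algebraic simplification, the only non-automatic move being the division by $2$ in the last case. The only point requiring a little care is to state the polynomial evaluations $F_n(1)=F_n$, $L_n(1)=L_n$, $F_n(2)=P_n$, $L_n(2)=Q_n$ explicitly, so that the reader sees exactly how the polynomial identities of the proposition collapse onto the numerical ones.
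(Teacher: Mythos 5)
Your proposal is correct and matches the paper's proof exactly: the paper also obtains all four identities by setting $y=1$ and $y=2$ in \eqref{main_Fib4} and \eqref{main_Luc4}, using $F_n(1)=F_n$, $L_n(1)=L_n$, $F_n(2)=P_n$, $L_n(2)=Q_n$. Your explicit handling of the factor $2$ in the fourth identity is just a spelled-out detail the paper leaves implicit.
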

\begin{proof}
	Set $y=1$ and $y=2$ in \eqref{main_Fib4} and \eqref{main_Luc4}, respectively. 
\end{proof}

The first identity in the corollary above was derived by Martiniak and Prodinger \cite{Martinjak}. 

We are not interested in repeating the analogous results to Corollaries \ref{cor_mark}--\ref{cor.x2hi58v}.
We mention, however, that the symmetry gets lost when passing from Proposition \ref{FmLm_ids} to its alternating analogue. 
\begin{proposition}
	For all $x\in\mathbb{C}$ and integers $m$ and $n$, we have
	\begin{equation*}\label{main_Fib51}
	\sum_{k=0}^n (-1)^{n-k} x^{n-k} \big (F_m L_{m(k+1)} + (x L_m - 2){F_{mk}}\big ) = L_m {F_{m(n+1)}}, \quad m\,\, \mbox{odd,}
	\end{equation*}
	\begin{equation*}\label{main_Fib52}
	\sum_{k=0}^n x^{n-k} \big ( F_m L_{m(k+1)} - (x L_m - 2)F_{mk} \big ) = L_m {F_{m(n+1)}}, \quad m\,\, \mbox{even}.
	\end{equation*}
\vspace{2pt}
	
	Similarly, we have
	\begin{align*}\label{main_Luc51}
	\sum_{k=0}^n (-1)^{n-k} x^{n-k} \big (5 F_m F_{m(k+1)} + (x L_m - 2){L_{mk}}\big ) = L_{m}\big( L_{m(n+1)} + 2(-1)^{n} x^{n+1}\big ), \quad m\, \mbox{odd},
	\end{align*}
	\begin{equation*}\label{main_Luc52}
	\sum_{k=0}^n x^{n-k} \big ( 5 F_m F_{m(k+1)} - (x L_m - 2)L_{mk} \big ) = L_m \big (L_{m(n+1)} - 2 x^{n+1}\big ), \quad m \mbox{ even}.
	\end{equation*}
\end{proposition}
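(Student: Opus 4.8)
The plan is to mimic the proof of Proposition~\ref{FmLm_ids}, but starting from the \emph{alternating} polynomial identities \eqref{main_Fib4} and \eqref{main_Luc4} rather than from \eqref{main_Fib1} and \eqref{main_Luc1}. The only external ingredients are the polynomial evaluations already recorded there: for odd $m$ one has $L_k(L_m) = L_{mk}$ and $F_k(L_m) = F_{mk}/F_m$, while for even $m$ one has $L_k(iL_m) = i^k L_{mk}$ and $F_k(iL_m) = i^{k-1}F_{mk}/F_m$ (here $i=\sqrt{-1}$), all consequences of $L_n^2 = 5F_n^2 + (-1)^n 4$. For the two Lucas identities I additionally use that the factor $y^2+4$ becomes $L_m^2+4 = 5F_m^2$ at $y = L_m$ (odd $m$) and $-L_m^2+4 = -5F_m^2$ at $y = iL_m$ (even $m$), again by $L_n^2 = 5F_n^2 + (-1)^n 4$.

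For odd $m$ the argument is direct. Substituting $y = L_m$ in \eqref{main_Fib4}, inserting $L_{k+1}(L_m) = L_{m(k+1)}$, $F_k(L_m) = F_{mk}/F_m$ and $F_{n+1}(L_m) = F_{m(n+1)}/F_m$, and multiplying through by $F_m$ yields the first identity. Likewise, substituting $y = L_m$ in \eqref{main_Luc4}, replacing $y^2+4$ by $5F_m^2$ and $F_{k+1}(L_m)$, $L_k(L_m)$, $L_{n+1}(L_m)$ by their evaluations, one factor $F_m$ cancels the denominator $F_m$ in $F_{k+1}(L_m)$ and the third identity drops out.

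For even $m$ I substitute $y = iL_m$ in \eqref{main_Fib4} and \eqref{main_Luc4}. The summands then carry powers of $i$ (from $L_{k+1}(iL_m)$, $F_k(iL_m)$, and the analogous terms on the right), and, exactly as in Proposition~\ref{FmLm_ids}, I remove them by afterwards replacing $x$ by $x/i$. This substitution acts in three places at once: it turns the outer factor $x^{n-k}$ into $i^{k-n}x^{n-k}$, the interior linear term $xy = x(iL_m)$ into $xL_m$, and (only in \eqref{main_Luc4}) the term $2yx^{n+1}$ on the right into $2(iL_m)i^{-(n+1)}x^{n+1}$. Collecting the powers of $i$ and using $i^{2k}(-1)^k = 1$, the alternating sign $(-1)^k$ cancels; using $i^{-1}=-i$ one simplifies the surviving global factor of $i$ and divides it out; finally multiplying through by $F_m$ clears the last denominator and produces the two non-alternating identities stated for even $m$.

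The whole computation is routine. The one delicate point is the bookkeeping of powers of $i$ in the even-$m$ case: because $x\mapsto x/i$ acts simultaneously on $x^{n-k}$, on the interior term $xy$, and (for the Lucas version) on the extra term $2yx^{n+1}$ of \eqref{main_Luc4}, a slip in tracking these exponents is precisely what would spoil the asymmetry between the odd-$m$ and even-$m$ statements — the alternating sign being present in the former and absent in the latter.
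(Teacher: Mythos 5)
Your proof is correct and is exactly the argument the paper intends: it states this proposition without proof, the implied method being the one of Proposition~\ref{FmLm_ids} (substitute $y=L_m$ for odd $m$, $y=iL_m$ for even $m$, then replace $x$ by $x/i$) applied to the alternating identities \eqref{main_Fib4} and \eqref{main_Luc4}, which is precisely what you do. Your bookkeeping of the powers of $i$ (including the extra term $2yx^{n+1}$ in \eqref{main_Luc4} and the evaluations $L_m^2+4=5F_m^2$, $-L_m^2+4=-5F_m^2$) checks out and correctly explains why the alternating sign disappears for even $m$.
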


\section{Extension to Chebyshev polynomials}

Let $T_n(y)$ and $U_n(y)$ be the Chebyshev polynomials of the first and second kind defined by 
\begin{align*}
T_n(y) &= 2yT_{n - 1}(y) - T_{n - 2}(y),\quad n\ge 2,\quad T_0(y)=1,\, T_1(y)=y,\\[4pt]
U_n(y) &= 2yU_{n - 1}(y) - U_{n - 2}(y),\quad n\ge 2,\quad U_0(y)=1,\, U_1(y)=2y.
\end{align*}
Both sequences $(T_n(y))_{n\geq0}$ and $(U_n(y))_{n\geq0}$ can be extended to negative subscripts by writing
$$T_{-n}(y) = T_n(y), \qquad U_{-n}(y) = 2y U_{-(n - 1)}(y) - U_{-(n - 2)}(y).
$$
\begin{lemma}
	For all $x$, $y$, $r\in \mathbb{C}$, $r\ne 0$ and $k\in\mathbb Z$, we have
	\begin{equation}\label{Cheb_id1}
	\frac{x^k}{r^k} \big( rT_k (y) + (xy - r)U_k (y) \big) = y\left( \frac{x^{k + 1}}{r^k} U_k (y) - \frac{x^k}{r^{k - 1}}U_{k - 1} (y) \right)\!,
	\end{equation}
	\begin{equation}\label{Cheb_id2}
	\frac{x^k}{r^k} \left( (y^2 - 1)rU_{k - 2} (y) + (xy - r)T_k (y) \right) = y\left( \frac{x^{k + 1}}{r^k}T_k (y) 
	- \frac{x^k}{r^{k - 1}}T_{k - 1} (y) \right)\!. 
	\end{equation}
\end{lemma}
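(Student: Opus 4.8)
The plan is to deduce both identities directly from the identities \eqref{eq.aklr76c} and \eqref{eq.iq7k21o} by regarding the Chebyshev polynomials as rescaled Lucas sequences. The key observation is that under the specialization $(y,z)\mapsto(2y,1)$ of the Lucas sequences one has $v_n(2y,1)=2T_n(y)$ and $u_n(2y,1)=U_{n-1}(y)$ (equivalently $u_{n+1}(2y,1)=U_n(y)$); these follow at once by comparing initial values and the two-sided recurrences, with the convention $U_{-1}(y)=0$ making the match valid for all $n\in\mathbb Z$.

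For \eqref{Cheb_id1} I would substitute $(y,z)\mapsto(2y,1)$ into \eqref{eq.aklr76c}: the left-hand side turns into $\frac{x^k}{r^k}\bigl(2rT_k(y)+2(xy-r)U_k(y)\bigr)$ and the right-hand side into $2y\bigl(\frac{x^{k+1}}{r^k}U_k(y)-\frac{x^k}{r^{k-1}}U_{k-1}(y)\bigr)$, and dividing through by $2$ yields \eqref{Cheb_id1} verbatim. For \eqref{Cheb_id2} I would substitute $(y,z)\mapsto(2y,1)$ into \eqref{eq.iq7k21o}, using $y^2-4z\mapsto 4(y^2-1)$; after cancelling the common factor $4$ one gets
\[
\frac{x^k}{r^k}\bigl((y^2-1)rU_{k-1}(y)+(xy-r)T_{k+1}(y)\bigr)=y\Bigl(\frac{x^{k+1}}{r^k}T_{k+1}(y)-\frac{x^k}{r^{k-1}}T_k(y)\Bigr).
\]
Replacing $k$ by $k-1$ and multiplying through by $x/r$ then puts this in exactly the stated form with $U_{k-2}(y)$. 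The hypothesis $r\ne0$ is inherited, and nothing further is needed.

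Alternatively, one can give a self-contained argument in the style of the proof of the lemma preceding Theorem~\ref{thm_Luc}: after clearing the powers of $x$ and $r$ (for $x\ne0$), \eqref{Cheb_id1} is equivalent to the mixed identity $T_k(y)=U_k(y)-yU_{k-1}(y)$ and \eqref{Cheb_id2} to $T_k(y)=yT_{k-1}(y)+(y^2-1)U_{k-2}(y)$, both of which drop out of the recurrence $U_k(y)+U_{k-2}(y)=2yU_{k-1}(y)$ together with $2T_k(y)=U_k(y)-U_{k-2}(y)$ (the Chebyshev counterpart of \eqref{eq.hain6x5}). In either route the only real obstacle is bookkeeping: keeping track of the factor $2$ relating $v_n$ and $T_n$, the shift between $u_n$ and $U_n$, the extra index shift needed for the second identity, and the conventions for negative subscripts; once these are pinned down, each identity reduces to a one-line algebraic check.
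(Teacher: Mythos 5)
Your proof is correct, but your main route is genuinely different from the paper's. The paper proves \eqref{Cheb_id1} and \eqref{Cheb_id2} directly from the Chebyshev recurrences together with the identities $2T_k(y)=U_k(y)-U_{k-2}(y)$ and $2(y^2-1)U_{k-2}(y)=T_k(y)-T_{k-2}(y)$, i.e., it repeats for Chebyshev polynomials the same telescoping-ready manipulation used for the Lucas sequences earlier. You instead obtain the lemma as a pure specialization of the already-established identities \eqref{eq.aklr76c} and \eqref{eq.iq7k21o} at $(y,z)\mapsto(2y,1)$, via $v_n(2y,1)=2T_n(y)$ and $u_{n+1}(2y,1)=U_n(y)$; your bookkeeping checks out: the factor $2$ (resp.\ $4$, since $y^2-4z\mapsto 4(y^2-1)$) cancels, the shift $k\mapsto k-1$ followed by multiplication by $x/r$ (legitimate since $r\ne0$ and the identity holds for all $k\in\mathbb Z$) produces \eqref{Cheb_id2} exactly, and the negative-index conventions match because the paper's backward recurrence for $U_{-n}$ forces $U_{-1}(y)=0$ and $U_{-n}(y)=-U_{n-2}(y)$, consistent with $u_{-n}(y,z)=-u_n(y,z)/z^n$ at $z=1$. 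What your route buys is economy and a conceptual point the paper leaves implicit: the whole Chebyshev section is literally the case $(y,z)=(2y,1)$ of the general Lucas-sequence results (up to the harmless index shift in the second identity), so no new proof is needed; what the paper's route buys is a self-contained Chebyshev section that does not lean on the earlier lemma. Your sketched alternative (clearing $x^k/r^k$ and reducing to $T_k=U_k-yU_{k-1}$ and $T_k=yT_{k-1}+(y^2-1)U_{k-2}$) is essentially the paper's proof, and the caveat $x\ne0$ there is immaterial since both sides are polynomial in $x$.
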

\begin{proof}
	In view of the recurrence relation
	$
	U_k (y) = 2yU_{k - 1} (y) - U_{k - 2} (y)
	$
	and the identity
	$
	2T_k (y) = U_k (y) - U_{k - 2} (y),
	$
	we have
	\[
	rT_k (y) + (xy - r)U_k (y) = y\big(xU_k(y) - rU_{k - 1}(y)\big),
	\]
	from which \eqref{Cheb_id1} follows. Similarly, using the recurrence relation
	\[
	T_k(y)=2yT_{k - 1}(y) - T_{k - 2}(y)
	\]
	and the identity
	$$
	2(y^2 - 1)U_{k - 2}(y)=T_k(y) - T_{k - 2}(y),
	$$
	we obtain
	\[
	(y^2 - 1)rU_{k - 2}(y) + (xy - r)T_k(y)=y\big(xT_k(y) - rT_{k - 1}(y)\big)
	\]
	from which \eqref{Cheb_id2} follows.
\end{proof}
\begin{theorem}\label{Cheb_main}
	For all $x$, $y$, $r\in\mathbb{C}$ and $j$, $n\in\mathbb Z$, we have
	\begin{align*}
	&\qquad\sum_{k = j}^n {r^{n - k} x^k \big( {rT_k (y) + (xy - r)U_k (y)} \big)} = x^{n + 1} yU_n (y) - r^{n - j + 1} x^j yU_{j - 1} (y),\\
	&\sum_{k = j}^n r^{n - k} x^k \left( {(y^2 - 1)rU_{k - 2} (y) + (xy - r)T_k (y)} \right) = x^{n + 1} yT_n (y) - r^{n - j + 1} x^j yT_{j - 1} (y).
	\end{align*}
	In particular, we have
	\begin{equation*}
	\sum_{k = 0}^n {x^k \big( {T_k (y) + (xy - 1)U_k (y)} \big)} = x^{n + 1} yU_n (y),
	\end{equation*}
	as well as
	\begin{equation*}
	\sum_{k = 0}^n {x^k \big( {(y^2 - 1)U_{k - 2} (y) + (xy - 1)T_k (y)} \big)} = x^{n + 1} yT_n (y) - y^2.
	\end{equation*}
\end{theorem}
\begin{proof}
	Sum each of \eqref{Cheb_id1} and \eqref{Cheb_id2}, using \eqref{tele_id1}.
\end{proof}
\begin{lemma}\label{Cheb_values} 
	For all  $x \in \mathbb{C}$, and all $n\geq 0$, we have
	\begin{align*}
	&U_n \Big( \frac{x^2+2}{2} \Big) = (-1)^{n+1} \frac{i}{x} U_{2n+1}\Big ( \frac{ix}{2}\Big ),\quad x\ne0,\\ 
	&\qquad\qquad  T_n \Big( \frac{x^2+2}{2} \Big) = (-1)^n T_{2n}\Big (\frac{ix}{2}\Big ).
	\end{align*}
\end{lemma}
\begin{proof}
	Both identities can be proved by induction on $n$.
\end{proof}
\begin{corollary}
	For all nonzero $x,y \in \mathbb{C}$, and all $n\geq 0$, we have
	\begin{align*}
	&\sum_{k=0}^n (-1)^{n-k} x^k \Big ( 2yT_{2k}(y) + \big(x(1-2y^2)-1\big) U_{2k+1}(y)\Big )\! = x^{n+1}(1-2y^2) U_{2n+1}(y),\\
	&\sum_{k=0}^n (-1)^{n-k} x^k \Big ( 2y (y^2-1) U_{2k-3}(y) + \big(x(1-2y^2)-1\big) T_{2k}(y)\Big ) \\
	&\qquad\qquad\qquad\qquad\qquad\qquad\qquad\qquad = x^{n+1} (1-2y^2) T_{2n}(y) -  (-1)^n(1-2y^2)^2
	\end{align*}
	with $U_{-3}(y)=-2y$, $U_{-2}(y) = -1$ and $U_{-1}(y) = 0$.
\end{corollary}
\begin{proof}
	Combine Theorem \ref{Cheb_main} with Lemma \ref{Cheb_values} and simplify.
\end{proof}

\section{Conclusion}

In these notes we have provided various complements to Fibonacci-Lucas identities first proved by Edgar, Sury, and Bhatnagar. 
Starting with additional polynomial generalizations, we have also presented a detailed discussion of charming identities 
for Fibonacci and Lucas polynomials as special cases. Finally, we have proved the analogues for Chebyshev polynomials. 
We conclude noting two observations. 
First, concerning the results for Fibonacci and Lucas polynomials we remark that our results can be 
combined with the beautiful identities derived by Seiffert \cite{Seiffert1,Seiffert2,Seiffert3} 
to get some nontrivial sum relations. This is left as a possible future research project.
Second, as it was shown here, the identities introduced and discussed are not limited 
to Fibonacci and Lucas, and Chebyshev polynomials. Analogues are possible for Pell polynomials, 
Jacobsthal polynomials and others. By way of a final illustration, we state the identities in Theorem \ref{thm_Luc} 
for the Jacobsthal and Jacobsthal--Lucas sequences $(J_n)=(u_n(1,-2))$ and $(j_n)=(v_n(1,-2))$:

For all $x$, $r\in\mathbb{C}$ and $s$, 
$n\in\mathbb Z$, we have
\begin{align*}
\sum_{k = s}^n r^{n - k} x^k \big(rj_k  + (x - 2r)J_{k + 1} \big)  = x^{n + 1} J_{n + 1}  - r^{n - s + 1} x^s J_s,\\ 
\sum_{k = s}^n r^{n - k} x^k \big(9rJ_k  + (x - 2r)j_{k + 1}\big )  = x^{n + 1} j_{n + 1}  - r^{n - s + 1} x^s j_s. 
\end{align*}
Interesting identities can be drawn from these relations. We leave them for the interested readers.

\end{document}